\documentclass[12pt,a4paper]{article}
%
%
\usepackage{filecontents}
\begin{filecontents}{eq7.dat}
 113,-1.30144701
 127,2.32345851
 131,2.44067448
 137,0.73859667
 139,0.38640156
 149,-0.33394041
 151,-1.3920528
 157,-0.08285178
 163,1.68349696
 167,0.38158912
 173,0.9206133
 179,0.85212083
 181,-0.19231846
 191,-0.35278447
 193,-0.18938148
 197,-0.52206367
 199,-0.61094965
 211,0.96207356
 223,1.36112927
 227,0.84743538
 229,0.68659965
 233,1.24230301
 239,1.31807853
 241,1.27099651
 251,2.41599487
 257,2.32749301
 263,2.90771572
 269,1.89245516
 271,2.0298104
 277,2.54935481
 281,2.73195138
 283,2.41871158
 293,1.83401023
 307,1.9377375
 311,2.10506968
 313,1.38271068
 317,1.39995568
 331,1.89397645
 337,1.6974802
 347,1.54232888
 349,1.35795461
 353,1.20016388
 359,1.59288237
 367,1.58631228
 373,1.67965051
 379,1.8756723
 383,1.89594775
 389,1.56010891
 397,1.20685074
 401,1.11327873
 409,1.4346131
 419,2.29807063
 421,1.7374624
 431,1.29161902
 433,1.01280501
 439,1.1636802
 443,1.49492775
 449,1.50929127
 457,1.33352007
 461,1.25469969
 463,1.3556785
 467,1.38939819
 479,1.59737231
 487,1.29472653
 491,1.60492209
 499,1.65188645
 503,1.80348588
 509,1.99068149
 521,2.01742885
 523,1.74928977
 541,2.20884146
 547,2.43870095
 557,2.31994699
 563,2.37319785
 569,1.9135401
 571,1.75317638
 577,1.44148737
 587,1.94018484
 593,2.11095518
 599,2.05294872
 601,1.85345391
 607,2.11126354
 613,2.24586052
 617,2.33963947
 619,2.2068038
 631,2.41244911
 641,2.41105309
 643,2.32027177
 647,2.31456266
 653,2.23228003
 659,1.98120431
 661,1.86305768
 673,2.40308147
 677,2.31432244
 683,2.36973846
 691,2.56233948
 701,2.68734909
 709,2.89020959
 719,3.12492863
 727,2.81528628
 733,2.87694075
 739,2.72705899
 743,2.73409851
 751,2.86985332
 757,2.77816018
 761,2.66925018
 769,2.57715836
 773,2.45975622
 787,2.67745174
 797,2.6801637
 809,2.61377175
 811,2.61055818
 821,2.61959177
 823,2.53246153
 827,2.50126326
 829,2.41779241
 839,2.71930721
 853,2.99284198
 857,3.04702167
 859,2.93442541
 863,2.86275717
 877,3.29193213
 881,3.12680741
 883,3.05101175
 887,2.93832836
 907,3.33436455
 911,3.18567953
 919,3.14177344
 929,3.35171729
 937,3.27018084
 941,3.22617915
 947,3.12790804
 953,3.27212282
 967,3.21319364
 971,3.23857051
 977,3.21238034
 983,3.07989651
 991,3.086861
 997,3.19766715
 1009,3.00031314
 1013,2.9949022
 1019,2.91008774
 1021,2.80967135
 1031,2.86973069
 1033,2.82302484
 1039,2.74428034
 1049,2.67724239
 1051,2.67179027
 1061,2.69326652
 1063,2.57467071
 1069,2.4757438
 1087,2.66313537
 1091,2.53429256
 1093,2.47596195
 1097,2.43419106
 1103,2.55337917
 1109,2.53892855
 1117,2.4515777
 1123,2.44366847
 1129,2.40330529
 1151,2.62448164
 1153,2.64822298
 1163,2.45098407
 1171,2.41521468
 1181,2.43686461
 1187,2.35993587
 1193,2.43533858
 1201,2.60546164
 1213,2.63437842
 1217,2.57552227
 1223,2.55991943
 1229,2.51630178
 1231,2.42871922
 1237,2.37160328
 1249,2.45126674
 1259,2.40619571
 1277,2.45109078
 1279,2.36705999
 1283,2.38279408
 1289,2.40901603
 1291,2.38013328
 1297,2.30602008
 1301,2.31614988
 1303,2.27572117
 1307,2.14115486
 1319,2.12827528
 1321,2.09563549
 1327,2.10087901
 1361,2.29100261
 1367,2.25508142
 1373,2.22683351
 1381,2.32898415
 1399,2.43082397
 1409,2.46656345
 1423,2.46316037
 1427,2.40636516
 1429,2.36792457
 1433,2.34356669
 1439,2.30255977
 1447,2.2943938
 1451,2.37280143
 1453,2.32913095
 1459,2.36820662
 1471,2.37923496
 1481,2.46738525
 1483,2.43420426
 1487,2.44711499
 1489,2.43146744
 1493,2.36416069
 1499,2.26063728
 1511,2.32748816
 1523,2.42060109
 1531,2.40966095
 1543,2.43014976
 1549,2.46389278
 1553,2.44268543
 1559,2.45903695
 1567,2.55563632
 1571,2.53606419
 1579,2.58291347
 1583,2.59380668
 1597,2.72135383
 1601,2.68536206
 1607,2.66622833
 1609,2.59676709
 1613,2.59117551
 1619,2.54257646
 1621,2.52958413
 1627,2.51518925
 1637,2.50317491
 1657,2.62760789
 1663,2.61816362
 1667,2.59123068
 1669,2.57485256
 1693,2.75970812
 1697,2.78349841
 1699,2.77195079
 1709,2.89283648
 1721,2.95248423
 1723,2.88735636
 1733,2.85286187
 1741,2.88019193
 1747,2.90693771
 1753,2.84303059
 1759,2.71484944
 1777,2.85087803
 1783,2.87847605
 1787,2.81924691
 1789,2.75886384
 1801,2.71540077
 1811,2.78556799
 1823,2.80850264
 1831,2.83729294
 1847,2.85129307
 1861,2.91601268
 1867,2.93907671
 1871,2.91585966
 1873,2.88244547
 1877,2.85761223
 1879,2.79849876
 1889,2.79806601
 1901,2.81724763
 1907,2.74100404
 1913,2.71253582
 1931,2.73395103
 1933,2.70857124
 1949,2.79732975
 1951,2.77091128
 1973,2.80605826
 1979,2.82090407
 1987,2.73639508
 1993,2.79466684
 1997,2.79053324
 1999,2.76757139
 2003,2.74340009
 2011,2.68463834
 2017,2.66999499
 2027,2.67832291
 2029,2.6586776
 2039,2.68662574
 2053,2.7375204
 2063,2.75533419
 2069,2.76934289
 2081,2.84797148
 2083,2.80610867
 2087,2.82907873
 2089,2.78941168
 2099,2.80783276
 2111,2.85450831
 2113,2.80302258
 2129,2.88843658
 2131,2.86340985
 2137,2.90167006
 2141,2.86384656
 2143,2.83622241
 2153,2.88721394
 2161,2.90221651
 2179,2.98839113
 2203,3.15460751
 2207,3.12667387
 2213,3.16530662
 2221,3.12215121
 2237,3.22562274
 2239,3.19325962
 2243,3.20887669
 2251,3.27908779
 2267,3.38285884
 2269,3.36084971
 2273,3.31620765
 2281,3.31691008
 2287,3.35544355
 2293,3.32742513
 2297,3.32947908
 2309,3.36581203
 2311,3.33525949
 2333,3.45110602
 2339,3.3705646
 2341,3.36148954
 2347,3.35118402
 2351,3.29215408
 2357,3.29884309
 2371,3.30414616
 2377,3.3042971
 2381,3.31285443
 2383,3.29408704
 2389,3.32999299
 2393,3.28911813
 2399,3.30054172
 2411,3.30443622
 2417,3.29720124
 2423,3.28450714
 2437,3.29486756
 2441,3.27880652
 2447,3.2941379
 2459,3.27530661
 2467,3.28662018
 2473,3.24718175
 2477,3.18931755
 2503,3.27272964
 2521,3.40431546
 2531,3.43506785
 2539,3.4862086
 2543,3.47001399
 2549,3.44953156
 2551,3.40614478
 2557,3.38124871
 2579,3.5411749
 2591,3.59579922
 2593,3.57430869
 2609,3.65677121
 2617,3.69073156
 2621,3.67176728
 2633,3.68039599
 2647,3.70166109
 2657,3.71439942
 2659,3.70815776
 2663,3.7026678
 2671,3.70347663
 2677,3.68030103
 2683,3.68842072
 2687,3.65903415
 2689,3.63196157
 2693,3.6230832
 2699,3.62039899
 2707,3.61720167
 2711,3.58772137
 2713,3.5683747
 2719,3.57821445
 2729,3.57395213
 2731,3.53532239
 2741,3.53205244
 2749,3.51277403
 2753,3.47712526
 2767,3.50258375
 2777,3.50848455
 2789,3.47849579
 2791,3.44072197
 2797,3.39936836
 2801,3.38520585
 2803,3.36428511
 2819,3.42213977
 2833,3.43105143
 2837,3.41681459
 2843,3.38806197
 2851,3.44467583
 2857,3.45434339
 2861,3.40213395
 2879,3.41999866
 2887,3.46500086
 2897,3.47308273
 2903,3.44542424
 2909,3.44896384
 2917,3.43091749
 2927,3.41205845
 2939,3.40440372
 2953,3.42679299
 2957,3.43275388
 2963,3.4306894
 2969,3.42664263
 2971,3.40487538
 2999,3.48304201
 3001,3.46847554
 3011,3.49935827
 3019,3.49433597
 3023,3.47997696
 3037,3.53074715
 3041,3.53628397
 3049,3.51724499
 3061,3.51085316
 3067,3.50127647
 3079,3.51948413
 3083,3.53678226
 3089,3.55486263
 3109,3.56093842
 3119,3.54811585
 3121,3.51960355
 3137,3.52172852
 3163,3.66426241
 3167,3.64080301
 3169,3.60619113
 3181,3.63362905
 3187,3.60389206
 3191,3.59810796
 3203,3.60200586
 3209,3.56202107
 3217,3.60001039
 3221,3.5914751
 3229,3.62391324
 3251,3.70089297
 3253,3.68110172
 3257,3.67824465
 3259,3.63878181
 3271,3.64211605
 3299,3.72749576
 3301,3.68852387
 3307,3.6966568
 3313,3.7218893
 3319,3.74474499
 3323,3.7562919
 3329,3.73069148
 3331,3.71088265
 3343,3.70263665
 3347,3.6930537
 3359,3.70649933
 3361,3.68927855
 3371,3.73627084
 3373,3.70976571
 3389,3.75111362
 3391,3.73368559
 3407,3.7603555
 3413,3.78683074
 3433,3.82359783
 3449,3.9038522
 3457,3.90269718
 3461,3.87809423
 3463,3.85856739
 3467,3.83083553
 3469,3.81560932
 3491,3.88244434
 3499,3.8653741
 3511,3.88913813
 3517,3.89370353
 3527,3.89451094
 3529,3.86967628
 3533,3.85249502
 3539,3.85862268
 3541,3.85965838
 3547,3.87369882
 3557,3.87885561
 3559,3.87630266
 3571,3.89188803
 3581,3.88577152
 3583,3.86594685
 3593,3.89876208
 3607,3.95339853
 3613,3.94520631
 3617,3.94203677
 3623,3.94711153
 3631,3.96608343
 3637,3.94791966
 3643,3.95610343
 3659,3.96842785
 3671,3.97007441
 3673,3.93640138
 3677,3.93506933
 3691,3.95818293
 3697,3.97114726
 3701,3.94931631
 3709,3.9589286
 3719,3.97241407
 3727,3.95925149
 3733,3.97038127
 3739,3.9584325
 3761,3.99091602
 3767,3.98740044
 3769,3.95791195
 3779,3.92491911
 3793,3.94780878
 3797,3.94464342
 3803,3.974506
 3821,4.01221104
 3823,3.99469295
 3833,4.022481
 3847,4.07027572
 3851,4.05650353
 3853,4.02114816
 3863,4.05020136
 3877,4.02204084
 3881,3.99912103
 3889,3.98197929
 3907,4.05547082
 3911,4.0423526
 3917,4.02326573
 3919,3.99878758
 3923,3.97426148
 3929,3.95691797
 3931,3.95314163
 3943,3.95128193
 3947,3.92901508
 3967,3.96419638
 3989,4.02659892
 4001,4.05088559
 4003,4.04093855
 4007,4.03233958
 4013,4.03912246
 4019,4.01449209
 4021,4.00414664
 4027,4.01935523
 4049,4.08060784
 4051,4.05712189
 4057,4.02751205
 4073,4.05274806
 4079,4.02061464
 4091,4.05273372
 4093,4.02470161
 4099,4.0514398
 4111,4.06648347
 4127,4.07669895
 4129,4.064116
 4133,4.04809696
 4139,4.04925607
 4153,4.04705568
 4157,4.04507859
 4159,4.01787709
 4177,4.08186395
 4201,4.13546653
 4211,4.13079039
 4217,4.09792555
 4219,4.07671373
 4229,4.07653809
 4231,4.0566512
 4241,4.05968857
 4243,4.04300682
 4253,4.06931149
 4259,4.08329687
 4261,4.06323334
 4271,4.08818593
 4273,4.06077705
 4283,4.06747238
 4289,4.05965494
 4297,4.07704109
 4327,4.14101487
 4337,4.1174957
 4339,4.099443
 4349,4.09607095
 4357,4.10682648
 4363,4.098248
 4373,4.09675118
 4391,4.15700596
 4397,4.15429178
 4409,4.15136459
 4421,4.185559
 4423,4.16931724
 4441,4.21429691
 4447,4.2261535
 4451,4.21618687
 4457,4.21789176
 4463,4.23728855
 4481,4.25732974
 4483,4.24664699
 4493,4.23176205
 4507,4.26926326
 4513,4.26120528
 4517,4.23861035
 4519,4.23149155
 4523,4.2038239
 4547,4.27356379
 4549,4.26679208
 4561,4.25854739
 4567,4.25663291
 4583,4.26755342
 4591,4.25098418
 4597,4.24955067
 4603,4.24197169
 4621,4.28631352
 4637,4.31066751
 4639,4.29792947
 4643,4.27426297
 4649,4.26820018
 4651,4.25911357
 4657,4.27398236
 4663,4.26330151
 4673,4.28203778
 4679,4.2894194
 4691,4.31310059
 4703,4.34107428
 4721,4.37803319
 4723,4.36039951
 4729,4.34644376
 4733,4.33509882
 4751,4.34382798
 4759,4.35446488
 4783,4.41310997
 4787,4.39580678
 4789,4.39003486
 4793,4.36437982
 4799,4.34728981
 4801,4.32594929
 4813,4.35398638
 4817,4.34912553
 4831,4.35692864
 4861,4.38270048
 4871,4.36937838
 4877,4.37144221
 4889,4.39838603
 4903,4.40797398
 4909,4.39514077
 4919,4.39620851
 4931,4.41468396
 4933,4.40239821
 4937,4.38772334
 4943,4.39870728
 4951,4.40593545
 4957,4.39121537
 4967,4.39791962
 4969,4.37797413
 4973,4.35406403
 4987,4.37520855
 4993,4.38112777
 4999,4.36246386
 5003,4.338212
 5009,4.34451446
 5011,4.3307136
 5021,4.35816503
 5023,4.35109634
 5039,4.41505263
 5051,4.41086507
 5059,4.39855189
 5077,4.41689635
 5081,4.39352101
 5087,4.38866649
 5099,4.3832021
 5101,4.37033847
 5107,4.36466251
 5113,4.35528255
 5119,4.36330789
 5147,4.43462497
 5153,4.42774125
 5167,4.43167035
 5171,4.42893979
 5179,4.42667714
 5189,4.46145097
 5197,4.45385983
 5209,4.45412149
 5227,4.51631929
 5231,4.51378578
 5233,4.49061277
 5237,4.48662381
 5261,4.55050143
 5273,4.56445669
 5279,4.55430833
 5281,4.52886709
 5297,4.54799719
 5303,4.53388122
 5309,4.55076931
 5323,4.60548981
 5333,4.59688378
 5347,4.57566125
 5351,4.57326273
 5381,4.62767823
 5387,4.61329603
 5393,4.60300555
 5399,4.60306521
 5407,4.59069778
 5413,4.60196244
 5417,4.59919941
 5419,4.58504157
 5431,4.60306001
 5437,4.60761591
 5441,4.59285733
 5443,4.58615738
 5449,4.58066334
 5471,4.63173639
 5477,4.60446552
 5479,4.59487428
 5483,4.5818726
 5501,4.60932708
 5503,4.59037265
 5507,4.56659025
 5519,4.58186719
 5521,4.56982944
 5527,4.56739403
 5531,4.55453773
 5557,4.59530163
 5563,4.58653898
 5569,4.59208898
 5573,4.58543891
 5581,4.57800221
 5591,4.56909448
 5623,4.63391157
 5639,4.64712938
 5641,4.63429577
 5647,4.62438929
 5651,4.62896068
 5653,4.60453569
 5657,4.58948969
 5659,4.57279014
 5669,4.56248725
 5683,4.58369714
 5689,4.59511301
 5693,4.5957293
 5701,4.60927803
 5711,4.63097783
 5717,4.61619656
 5737,4.6587316
 5741,4.64343941
 5743,4.63458238
 5749,4.64393463
 5779,4.67587175
 5783,4.67450356
 5791,4.65604099
 5801,4.65102456
 5807,4.6422755
 5813,4.65769702
 5821,4.6565715
 5827,4.6506393
 5839,4.65297724
 5843,4.62940484
 5849,4.63396252
 5851,4.61874743
 5857,4.62632229
 5861,4.62513336
 5867,4.61877953
 5869,4.60872444
 5879,4.61032724
 5881,4.58754011
 5897,4.62354475
 5903,4.63316462
 5923,4.65306963
 5927,4.6586533
 5939,4.67646345
 5953,4.69121114
 5981,4.70439911
 5987,4.70201113
 6007,4.73413232
 6011,4.72002022
 6029,4.73231315
 6037,4.72412194
 6043,4.70473418
 6047,4.70453516
 6053,4.68467297
 6067,4.68908227
 6073,4.69624509
 6079,4.68241203
 6089,4.67910623
 6091,4.65680012
 6101,4.66011934
 6113,4.67174217
 6121,4.66318681
 6131,4.66492973
 6133,4.65094591
 6143,4.66835302
 6151,4.6725853
 6163,4.68056033
 6173,4.67143399
 6197,4.69414233
 6199,4.68267278
 6203,4.68178526
 6211,4.70376123
 6217,4.70322262
 6221,4.68672115
 6229,4.68262714
 6247,4.71296029
 6257,4.71818153
 6263,4.73179348
 6269,4.72916233
 6271,4.72202105
 6277,4.70644969
 6287,4.69122452
 6299,4.69313913
 6301,4.67983889
 6311,4.67375477
 6317,4.67321189
 6323,4.67326541
 6329,4.65811649
 6337,4.66776198
 6343,4.65749102
 6353,4.6580471
 6359,4.66172274
 6361,4.65150978
 6367,4.65618833
 6373,4.65131945
 6379,4.63524874
 6389,4.64376082
 6397,4.63613195
 6421,4.68288115
 6427,4.68083231
 6449,4.72539584
 6451,4.7150119
 6469,4.73461718
 6473,4.7286745
 6481,4.73330954
 6491,4.74477468
 6521,4.80277196
 6529,4.8036035
 6547,4.81098352
 6551,4.80675999
 6553,4.78695317
 6563,4.79945328
 6569,4.7811278
 6571,4.76747767
 6577,4.75279886
 6581,4.73699748
 6599,4.76958559
 6607,4.7878814
 6619,4.81371302
 6637,4.83856148
 6653,4.85321465
 6659,4.83864968
 6661,4.82806027
 6673,4.8382757
 6679,4.83636508
 6689,4.84773323
 6691,4.83124078
 6701,4.83273369
 6703,4.82464509
 6709,4.82891833
 6719,4.8308339
 6733,4.85157298
 6737,4.84340555
 6761,4.85587156
 6763,4.84572642
 6779,4.86395682
 6781,4.85815064
 6791,4.86241039
 6793,4.85287652
 6803,4.86165412
 6823,4.85700364
 6827,4.84781454
 6829,4.83134312
 6833,4.8251726
 6841,4.84031649
 6857,4.86160854
 6863,4.86288256
 6869,4.85532035
 6871,4.84111213
 6883,4.86270187
 6899,4.87189157
 6907,4.87347356
 6911,4.87004545
 6917,4.86998129
 6947,4.9115537
 6949,4.89691075
 6959,4.89679813
 6961,4.88295389
 6967,4.87334274
 6971,4.86899421
 6977,4.87530081
 6983,4.86346308
 6991,4.86108579
 6997,4.86558105
 7001,4.85544666
 7013,4.85906203
 7019,4.84751865
 7027,4.85231152
 7039,4.85026007
 7043,4.83346423
 7057,4.84074672
 7069,4.87274696
 7079,4.87612634
 7103,4.930032
 7109,4.93442406
 7121,4.96608651
 7127,4.96384529
 7129,4.94853968
 7151,4.96852121
 7159,4.96398753
 7177,5.00441608
 7187,5.00425429
 7193,5.00385632
 7207,5.02081707
 7211,5.00757229
 7213,4.99467255
 7219,4.98946437
 7229,4.99150917
 7237,4.99415042
 7243,4.98964542
 7247,4.98198859
 7253,4.98297376
 7283,5.00024824
 7297,5.00141917
 7307,5.0063794
 7309,4.99341374
 7321,5.00232455
 7331,5.00446142
 7333,4.99620142
 7349,5.00456961
 7351,4.99929641
 7369,5.00599987
 7393,5.04787326
 7411,5.04196346
 7417,5.0451238
 7433,5.07524782
 7451,5.08971978
 7457,5.08471503
 7459,5.07381243
 7477,5.08800095
 7481,5.07577022
 7487,5.07696957
 7489,5.06225713
 7499,5.05754747
 7507,5.0438957
 7517,5.04583342
 7523,5.04799821
 7529,5.05329111
 7537,5.05506912
 7541,5.05553059
 7547,5.04104986
 7549,5.02899665
 7559,5.03454572
 7561,5.01765546
 7573,5.01902911
 7577,5.01038197
 7583,4.99657646
 7589,4.99851153
 7591,4.98754683
 7603,4.99391426
 7607,4.98432884
 7621,4.99567753
 7639,5.012462
 7643,5.00320809
 7649,5.00629469
 7669,5.02663041
 7673,5.01712257
 7681,5.02116475
 7687,5.01261892
 7691,5.00429098
 7699,5.00636328
 7703,4.99850556
 7717,5.00950197
 7723,5.00120206
 7727,4.99179847
 7741,5.01231843
 7753,5.02346933
 7757,5.01928426
 7759,5.00720993
 7789,5.04072374
 7793,5.03499836
 7817,5.05632329
 7823,5.05852074
 7829,5.04870709
 7841,5.06533932
 7853,5.05936424
 7867,5.05961089
 7873,5.06291437
 7877,5.05472054
 7879,5.04143155
 7883,5.04111072
 7901,5.07349576
 7907,5.08081879
 7919,5.09925914
 7927,5.08750951
 7933,5.09004598
 7937,5.07932897
 7949,5.07874653
 7951,5.07017431
 7963,5.07437103
 7993,5.11024376
 8009,5.11676005
 \end{filecontents}
\begin{filecontents}{eq15.dat}
113,3.41900869
127,5.35090268
131,5.13742523
137,3.14886833
139,2.86715031
149,1.65389915
151,0.87011212
157,1.92848042
163,3.40591371
167,2.11859214
173,2.46701985
179,1.95433366
181,1.41837791
191,0.64797251
193,1.2194454
197,0.86356451
199,1.07260147
211,1.93706431
223,1.53861261
227,1.14110311
229,1.20473289
233,1.77564746
239,1.9517218
241,2.19803121
251,2.77107112
257,2.59579569
263,3.00002394
269,2.00025737
271,2.28947627
277,2.73014932
281,3.00357466
283,2.8988505
293,2.01517293
307,1.51744821
311,1.80692288
313,1.26764131
317,1.31538811
331,1.27354403
337,0.99820833
347,0.51725907
349,0.5684775
353,0.49479874
359,0.84312689
367,0.66797202
373,0.77154955
379,0.92975668
383,0.98428471
389,0.65906763
397,0.21087412
401,0.15250036
409,0.2958067
419,0.83930014
421,0.51396768
431,-0.08604829
433,-0.16318606
439,-0.07264114
443,0.31870303
449,0.26549792
457,0.029459
461,0.04126337
463,0.25059753
467,0.29915799
479,0.32384925
487,-0.12735013
491,0.22451762
499,0.25864569
503,0.44644541
509,0.59830471
521,0.42891547
523,0.28165801
541,0.27566344
547,0.41471709
557,0.20284497
563,0.2668693
569,-0.14959761
571,-0.15907106
577,-0.47971296
587,-0.12369614
593,0.0512751
599,0.002704976755704
601,-0.14725348
607,0.15195261
613,0.23263952
617,0.35414812
619,0.32057724
631,0.38488744
641,0.22283612
643,0.22413868
647,0.28106073
653,0.19816435
659,-0.005287566333393
661,8.42218440877831E-05
673,0.34355069
677,0.27775585
683,0.29890102
691,0.48319573
701,0.51243104
709,0.67014754
719,0.79678024
727,0.48580033
733,0.5496586
739,0.35858392
743,0.41662272
751,0.47247139
757,0.4104597
761,0.34522904
769,0.22229453
773,0.16435031
787,0.23321501
797,0.10628004
809,-0.10610888
811,-0.02766188
821,-0.09582525
823,-0.10036702
827,-0.07633601
829,-0.07710502
839,0.13170514
853,0.27372796
857,0.36511988
859,0.34294831
863,0.28121055
877,0.56822717
881,0.4373634
883,0.44382115
887,0.3781492
907,0.5304899
911,0.43495568
919,0.36179322
929,0.49290544
937,0.38284986
941,0.38333018
947,0.29807824
953,0.43678669
967,0.26005774
971,0.31186028
977,0.274956
983,0.14108866
991,0.10373341
997,0.19347986
1009,-0.05076213
1013,-0.005133314823912
1019,-0.0926228
1021,-0.12842344
1031,-0.17120142
1033,-0.14218555
1039,-0.21054241
1049,-0.31302694
1051,-0.25517712
1061,-0.27795776
1063,-0.32152512
1069,-0.42124548
1087,-0.36029397
1091,-0.44340583
1093,-0.43548261
1097,-0.45793589
1103,-0.34529209
1109,-0.36056473
1117,-0.48417226
1123,-0.4755663
1129,-0.49399143
1151,-0.50176794
1153,-0.42763102
1163,-0.68053271
1171,-0.73016139
1181,-0.75599002
1187,-0.8174189
1193,-0.76719984
1201,-0.59049328
1213,-0.63627132
1217,-0.64863154
1223,-0.64589853
1229,-0.66652184
1231,-0.69955629
1237,-0.75973932
1249,-0.75730203
1259,-0.86148359
1277,-0.95745701
1279,-0.97307481
1283,-0.94751344
1289,-0.91978904
1291,-0.89429052
1297,-0.93922919
1301,-0.91315244
1303,-0.89697398
1307,-0.99182078
1319,-1.08355685
1321,-1.07137112
1327,-1.05403815
1361,-1.16356913
1367,-1.17128185
1373,-1.20226339
1381,-1.1244144
1399,-1.12219402
1409,-1.14671498
1423,-1.2068651
1427,-1.22670546
1429,-1.22324459
1433,-1.2280191
1439,-1.24969202
1447,-1.27926085
1451,-1.17953478
1453,-1.17072998
1459,-1.12736054
1471,-1.15377006
1481,-1.11128818
1483,-1.1055121
1487,-1.05667879
1489,-1.03614503
1493,-1.07619638
1499,-1.17368397
1511,-1.1470417
1523,-1.11751857
1531,-1.14999645
1543,-1.18942445
1549,-1.1395574
1553,-1.14700871
1559,-1.12170148
1567,-1.03883617
1571,-1.02952491
1579,-1.00656525
1583,-0.96291541
1597,-0.91087324
1601,-0.91735387
1607,-0.91271639
1609,-0.9386149
1613,-0.92154634
1619,-0.96714831
1621,-0.93919676
1627,-0.93700975
1637,-0.97803792
1657,-0.9744159
1663,-0.97829516
1667,-0.9819193
1669,-0.95546614
1693,-0.90151752
1697,-0.8670631
1699,-0.83431696
1709,-0.74524843
1721,-0.7475251
1723,-0.76257261
1733,-0.81775107
1741,-0.80816583
1747,-0.78536321
1753,-0.84049824
1759,-0.93861504
1777,-0.89803647
1783,-0.86753502
1787,-0.89592093
1789,-0.90757242
1801,-0.98029935
1811,-0.93321386
1823,-0.95850371
1831,-0.94465134
1847,-0.99534529
1861,-0.98759168
1867,-0.97775515
1871,-0.98290185
1873,-0.97883559
1877,-0.98391387
1879,-1.01347517
1889,-1.03404029
1901,-1.04302593
1907,-1.10253693
1913,-1.13589748
1931,-1.17743869
1933,-1.16035793
1949,-1.15903108
1951,-1.14558537
1973,-1.21663999
1979,-1.19168171
1987,-1.27443378
1993,-1.22069844
1997,-1.20760734
1999,-1.19566
2003,-1.19968612
2011,-1.25136654
2017,-1.26072577
2027,-1.26801575
2029,-1.25231004
2039,-1.25736219
2053,-1.24643516
2063,-1.25243211
2069,-1.23701717
2081,-1.20321012
2083,-1.21001197
2087,-1.17230921
2089,-1.17877356
2099,-1.18183081
2111,-1.15239477
2113,-1.16702455
2129,-1.14746524
2131,-1.13931401
2137,-1.09328148
2141,-1.10793229
2143,-1.10240491
2153,-1.07209183
2161,-1.06888293
2179,-1.04669403
2203,-1.00046008
2207,-1.00746823
2213,-0.95429041
2221,-0.99891932
2237,-0.94448597
2239,-0.94860824
2243,-0.9127698
2251,-0.85538065
2267,-0.81117698
2269,-0.80025138
2273,-0.82573911
2281,-0.83415673
2287,-0.7866292
2293,-0.80277871
2297,-0.77962432
2309,-0.77729645
2311,-0.77397044
2333,-0.75376763
2339,-0.8239028
2341,-0.80418521
2347,-0.80332688
2351,-0.85182385
2357,-0.84230423
2371,-0.87884533
2377,-0.8762212
2381,-0.85375299
2383,-0.84108515
2389,-0.79297706
2393,-0.81166644
2399,-0.79263895
2411,-0.8269148
2417,-0.83241427
2423,-0.8385071
2437,-0.84653356
2441,-0.84687196
2447,-0.82616197
2459,-0.87646399
2467,-0.87634705
2473,-0.89922507
2477,-0.9396998
2503,-0.96217518
2521,-0.88675682
2531,-0.8814844
2539,-0.84230905
2543,-0.83702881
2549,-0.85396895
2551,-0.86587327
2557,-0.88271089
2579,-0.80486237
2591,-0.77749745
2593,-0.76669773
2609,-0.73609909
2617,-0.70502962
2621,-0.69737568
2633,-0.71637856
2647,-0.72598503
2657,-0.71627972
2659,-0.70255019
2663,-0.68426615
2671,-0.69375819
2677,-0.70993764
2683,-0.69842899
2687,-0.7097795
2689,-0.71032125
2693,-0.69862111
2699,-0.6937327
2707,-0.70011265
2711,-0.71603769
2713,-0.70744565
2719,-0.69159028
2729,-0.70655216
2731,-0.7183228
2741,-0.74433898
2749,-0.76157894
2753,-0.78437319
2767,-0.78677033
2777,-0.79281053
2789,-0.85021665
2791,-0.86396368
2797,-0.89346321
2801,-0.88770471
2803,-0.88243654
2819,-0.86357028
2833,-0.89153907
2837,-0.89266749
2843,-0.91370851
2851,-0.86069158
2857,-0.84956421
2861,-0.88109264
2879,-0.91226508
2887,-0.8748386
2897,-0.87702914
2903,-0.90031804
2909,-0.89361676
2917,-0.91173301
2927,-0.93921073
2939,-0.95967573
2953,-0.96460521
2957,-0.94265848
2963,-0.93822292
2969,-0.93434176
2971,-0.93037402
2999,-0.95120634
3001,-0.94569656
3011,-0.93465774
3019,-0.94039633
3023,-0.9389285
3037,-0.91507019
3041,-0.89465484
3049,-0.91432827
3061,-0.93669767
3067,-0.94275784
3079,-0.95121183
3083,-0.91923377
3089,-0.89509693
3109,-0.93837011
3119,-0.96409684
3121,-0.96517623
3137,-1.00030763
3163,-0.95843805
3167,-0.96437243
3169,-0.97413497
3181,-0.96366237
3187,-0.98451711
3191,-0.97813126
3203,-0.99761239
3209,-1.0270192
3217,-0.99527167
3221,-0.98768247
3229,-0.95595285
3251,-0.93401844
3253,-0.93343307
3257,-0.92309953
3259,-0.93956502
3271,-0.94694956
3299,-0.95185903
3301,-0.96760929
3307,-0.95123475
3313,-0.91956152
3319,-0.89430503
3323,-0.87040867
3329,-0.88262729
3331,-0.87998328
3343,-0.9052863
3347,-0.89837103
3359,-0.90885471
3361,-0.90073474
3371,-0.86972332
3373,-0.87379109
3389,-0.86767846
3391,-0.86244281
3407,-0.86496987
3413,-0.83168323
3433,-0.84483024
3449,-0.80226498
3457,-0.80139353
3461,-0.81127458
3463,-0.8079019
3467,-0.82120123
3469,-0.81840804
3491,-0.80867908
3499,-0.82622485
3511,-0.81703982
3517,-0.80135682
3527,-0.8119104
3529,-0.81380742
3533,-0.81754397
3539,-0.80587578
3541,-0.78920379
3547,-0.7711175
3557,-0.77190463
3559,-0.75689206
3571,-0.7516638
3581,-0.76540224
3583,-0.76221588
3593,-0.74343461
3607,-0.71837115
3613,-0.72047822
3617,-0.71591773
3623,-0.70795197
3631,-0.69149314
3637,-0.70073526
3643,-0.68927901
3659,-0.70484901
3671,-0.72500621
3673,-0.73782647
3677,-0.72548311
3691,-0.72513321
3697,-0.70474714
3701,-0.71187498
3709,-0.70175809
3719,-0.69844266
3727,-0.70883078
3733,-0.6899871
3739,-0.7016082
3761,-0.72008045
3767,-0.72094108
3769,-0.732773
3779,-0.76749884
3793,-0.76092301
3797,-0.75167951
3803,-0.71786346
3821,-0.71701617
3823,-0.71741058
3833,-0.69961952
3847,-0.67293523
3851,-0.67380106
3853,-0.6848726
3863,-0.67159677
3877,-0.72136422
3881,-0.73159099
3889,-0.75003263
3907,-0.71349546
3911,-0.71331929
3917,-0.7269567
3919,-0.73014129
3923,-0.74513025
3929,-0.76095883
3931,-0.74689176
3943,-0.75708743
3947,-0.76431531
3967,-0.76991727
3989,-0.7549837
4001,-0.74575619
4003,-0.73693348
4007,-0.7362948
4013,-0.72274817
4019,-0.74157056
4021,-0.73501031
4027,-0.71269327
4049,-0.70376954
4051,-0.70859024
4057,-0.73133051
4073,-0.73347025
4079,-0.75694817
4091,-0.73795819
4093,-0.74711951
4099,-0.71224322
4111,-0.71433078
4127,-0.72123687
4129,-0.71679762
4133,-0.71959494
4139,-0.71380672
4153,-0.74150196
4157,-0.7330876
4159,-0.73938051
4177,-0.70288342
4201,-0.69621084
4211,-0.71129137
4217,-0.73992144
4219,-0.74047696
4229,-0.7493869
4231,-0.75076608
4241,-0.74838934
4243,-0.74702917
4253,-0.73744187
4259,-0.71733703
4261,-0.71972099
4271,-0.70049515
4273,-0.71020146
4283,-0.70662292
4289,-0.71324925
4297,-0.6950994
4327,-0.70456241
4337,-0.73094372
4339,-0.73118756
4349,-0.74356358
4357,-0.73237113
4363,-0.73388857
4373,-0.74333137
4391,-0.71785439
4397,-0.71356735
4409,-0.7250023
4421,-0.70834926
4423,-0.70976524
4441,-0.69628076
4447,-0.67820816
4451,-0.67436613
4457,-0.66763457
4463,-0.64508181
4481,-0.65037555
4483,-0.64408446
4493,-0.66431651
4507,-0.65047339
4513,-0.65252106
4517,-0.66373149
4519,-0.65675694
4523,-0.67333187
4547,-0.65106808
4549,-0.63843916
4561,-0.6570346
4567,-0.65687794
4583,-0.66836829
4591,-0.6825894
4597,-0.68082758
4603,-0.68481662
4621,-0.66968496
4637,-0.66495133
4639,-0.66435741
4643,-0.67730468
4649,-0.67494666
4651,-0.66848492
4657,-0.65066928
4663,-0.65333874
4673,-0.64080438
4679,-0.62761815
4691,-0.61360715
4703,-0.60072768
4721,-0.59954645
4723,-0.59943284
4729,-0.6090732
4733,-0.60975885
4751,-0.62659027
4759,-0.61844279
4783,-0.60268701
4787,-0.60798863
4789,-0.59792979
4793,-0.61275745
4799,-0.62131074
4801,-0.62640352
4813,-0.611385
4817,-0.60683184
4831,-0.61380058
4861,-0.64718871
4871,-0.66707034
4877,-0.65988753
4889,-0.64729237
4903,-0.65153203
4909,-0.66029832
4919,-0.66251856
4931,-0.65193416
4933,-0.64932063
4937,-0.65305543
4943,-0.63633798
4951,-0.63133772
4957,-0.64075438
4967,-0.64011519
4969,-0.64104075
4973,-0.6525309
4987,-0.65077086
4993,-0.64210962
4999,-0.65455287
5003,-0.6676224
5009,-0.65640509
5011,-0.65399924
5021,-0.6370429
5023,-0.63031195
5039,-0.59123149
5051,-0.6070167
5059,-0.61696334
5077,-0.62403815
5081,-0.63459293
5087,-0.63493369
5099,-0.64879892
5101,-0.64747429
5107,-0.64859842
5113,-0.65116213
5119,-0.64122582
5147,-0.61980201
5153,-0.61946034
5167,-0.63013271
5171,-0.62416774
5179,-0.62809616
5189,-0.59855726
5197,-0.60400497
5209,-0.61200786
5227,-0.58083708
5231,-0.57342274
5233,-0.5821975
5237,-0.57554414
5261,-0.55083119
5273,-0.54728134
5279,-0.55494705
5281,-0.56398837
5297,-0.56314529
5303,-0.57228527
5309,-0.55141279
5323,-0.50919562
5333,-0.52533022
5347,-0.55733051
5351,-0.54999055
5381,-0.55201961
5387,-0.56223757
5393,-0.56752101
5399,-0.5639078
5407,-0.57859286
5413,-0.5601444
5417,-0.55335715
5419,-0.55071148
5431,-0.54644403
5437,-0.53792811
5441,-0.54254738
5443,-0.53459005
5449,-0.53500439
5471,-0.51687746
5477,-0.53464726
5479,-0.52870563
5483,-0.53155348
5501,-0.52625861
5503,-0.52995364
5507,-0.54295162
5519,-0.54013434
5521,-0.53809302
5527,-0.53423999
5531,-0.53726826
5557,-0.53405196
5563,-0.53820143
5569,-0.53104072
5573,-0.5306716
5581,-0.53722161
5591,-0.55332719
5623,-0.54525779
5639,-0.55205395
5641,-0.54928724
5647,-0.55232087
5651,-0.54075626
5653,-0.55027123
5657,-0.55453519
5659,-0.55870125
5669,-0.57226281
5683,-0.56506285
5689,-0.54877005
5693,-0.53992101
5701,-0.52578221
5711,-0.50744949
5717,-0.5185756
5737,-0.50502556
5741,-0.50847104
5743,-0.50360342
5749,-0.49188504
5779,-0.50485117
5783,-0.49756834
5791,-0.51721178
5801,-0.52606163
5807,-0.5299829
5813,-0.50925348
5821,-0.50812565
5827,-0.51250929
5839,-0.52106596
5843,-0.53541647
5849,-0.52899109
5851,-0.53163351
5857,-0.5202465
5861,-0.51551795
5867,-0.51810716
5869,-0.51574314
5879,-0.51557879
5881,-0.52438509
5897,-0.51116958
5903,-0.49856894
5923,-0.50350107
5927,-0.48919188
5939,-0.4787477
5953,-0.47655642
5981,-0.50516827
5987,-0.50241173
6007,-0.49765179
6011,-0.50142471
6029,-0.50786341
6037,-0.51784398
6043,-0.53008371
6047,-0.52346699
6053,-0.53794068
6067,-0.54620936
6073,-0.53605445
6079,-0.54794
6089,-0.55338358
6091,-0.56138793
6101,-0.55959479
6113,-0.55797209
6121,-0.5669373
6131,-0.56810743
6133,-0.56841537
6143,-0.55449312
6151,-0.55183277
6163,-0.55315364
6173,-0.56370778
6197,-0.57019323
6199,-0.56852414
6203,-0.56120195
6211,-0.54147839
6217,-0.5382207
6221,-0.54562586
6229,-0.54564199
6247,-0.53567213
6257,-0.53385414
6263,-0.51694508
6269,-0.51253225
6271,-0.50591169
6277,-0.51745979
6287,-0.53297228
6299,-0.53659725
6301,-0.53698556
6311,-0.5482095
6317,-0.54548043
6323,-0.54270807
6329,-0.55427634
6337,-0.54333189
6343,-0.5505797
6353,-0.54963267
6359,-0.54268609
6361,-0.54129813
6367,-0.53151427
6373,-0.53509529
6379,-0.54702123
6389,-0.53809357
6397,-0.54712348
6421,-0.5311638
6427,-0.53133101
6449,-0.51694802
6451,-0.51665481
6469,-0.51342502
6473,-0.51142472
6481,-0.51089673
6491,-0.50419547
6521,-0.48463269
6529,-0.48520555
6547,-0.49570138
6551,-0.49488996
6553,-0.50143479
6563,-0.49564389
6569,-0.50801397
6571,-0.50882204
6577,-0.5192
6581,-0.52666507
6599,-0.51143014
6607,-0.49468528
6619,-0.47979874
6637,-0.47193101
6653,-0.4750063
6659,-0.48316986
6661,-0.48221478
6673,-0.479541
6679,-0.47569354
6689,-0.46741776
6691,-0.4701799
6701,-0.47410895
6703,-0.47067668
6709,-0.46242615
6719,-0.46249422
6733,-0.45035166
6737,-0.45024284
6761,-0.46441051
6763,-0.46355632
6779,-0.45830845
6781,-0.45480214
6791,-0.45369436
6793,-0.45236447
6803,-0.44759629
6823,-0.47319242
6827,-0.47352659
6829,-0.47773543
6833,-0.47604602
6841,-0.46061959
6857,-0.45403612
6863,-0.45177805
6869,-0.45318995
6871,-0.45441244
6883,-0.44146332
6899,-0.44679128
6907,-0.44612852
6911,-0.4431954
6917,-0.43864203
6947,-0.44071903
6949,-0.44307629
6959,-0.44612028
6961,-0.44919061
6967,-0.45285972
6971,-0.44985631
6977,-0.44145744
6983,-0.45049291
6991,-0.45075737
6997,-0.44271466
7001,-0.44541559
7013,-0.44890369
7019,-0.45388643
7027,-0.44923106
7039,-0.45505824
7043,-0.46219821
7057,-0.46555799
7069,-0.44063076
7079,-0.44151277
7103,-0.41741229
7109,-0.40831214
7121,-0.38434981
7127,-0.38143853
7129,-0.3853649
7151,-0.39138142
7159,-0.39599849
7177,-0.37307008
7187,-0.37542867
7193,-0.37297114
7207,-0.36391529
7211,-0.36913044
7213,-0.37160517
7219,-0.37224259
7229,-0.37429189
7237,-0.37136565
7243,-0.37175582
7247,-0.37293269
7253,-0.36863831
7283,-0.3867895
7297,-0.39512877
7307,-0.39241859
7309,-0.39432328
7321,-0.39447993
7331,-0.39465798
7333,-0.39241362
7349,-0.39665544
7351,-0.39210246
7369,-0.40264917
7393,-0.38793399
7411,-0.41339091
7417,-0.40514245
7433,-0.38687879
7451,-0.39018366
7457,-0.39230032
7459,-0.39149685
7477,-0.39223321
7481,-0.3978368
7487,-0.39239177
7489,-0.39520181
7499,-0.39977189
7507,-0.41313468
7517,-0.41654992
7523,-0.41092606
7529,-0.39981512
7537,-0.39497775
7541,-0.38918239
7547,-0.39871099
7549,-0.40110751
7559,-0.39742084
7561,-0.40356595
7573,-0.40520374
7577,-0.40699862
7583,-0.417782
7589,-0.41164177
7591,-0.41301258
7603,-0.4137973
7607,-0.41669494
7621,-0.41464474
7639,-0.41447703
7643,-0.41600748
7649,-0.41003499
7669,-0.40587205
7673,-0.40753689
7681,-0.4046487
7687,-0.40928761
7691,-0.41141033
7699,-0.41017996
7703,-0.41007246
7717,-0.4085683
7723,-0.41286992
7727,-0.41538415
7741,-0.40390192
7753,-0.40004765
7757,-0.39677027
7759,-0.3996113
7789,-0.39828493
7793,-0.39762183
7817,-0.39998697
7823,-0.39569742
7829,-0.40100974
7841,-0.3914392
7853,-0.40435306
7867,-0.41241894
7873,-0.40634577
7877,-0.40738954
7879,-0.40990363
7883,-0.40127931
7901,-0.3864383
7907,-0.37581796
7919,-0.36558704
7927,-0.37462397
7933,-0.36864627
7937,-0.37260694
7949,-0.37904356
7951,-0.37697874
7963,-0.37867361
7993,-0.37346276
8009,-0.37811347 
\end{filecontents}
%
%
\usepackage{pstricks} 
\usepackage{pstricks-add} 
\usepackage{booktabs} 
\usepackage{multirow}
\usepackage{colortbl}
\usepackage{changepage}
%

\usepackage{amsmath}
\usepackage{graphicx}%
\usepackage{amsfonts}%
\usepackage{amssymb}
\newtheorem{theorem}{Theorem}

\newtheorem{lemma}[theorem]{Lemma}

\newenvironment{proof}[1][Proof]{\textbf{#1.} }{\ \rule{0.5em}{0.5em}}

\begin{document}

\title{A Sieve for Twin Primes}
\author{Jon S. Birdsey and Geza Schay}
\maketitle

\begin{abstract}
We present an algorithm analogous to the sieve of Eratosthenes that produces
the list of twin primes.

Next, we count the number of twin primes resulting from the construction with
two different heuristic arguments. The first method is essentially the same as
the one in \cite{1}. However, the second method is novel. It results in the
same asymptotic formula but it uses a simpler correction factor than \cite{1}.

Though we have no theory for the accuracy of our estimates, we compute them
both without and with the correction factor and they turn out to be close to
the actual counts up to $8009^{2}$.

\end{abstract}
\maketitle

A famous unproved problem in number theory is the twin prime conjecture, which
says that the number of twin \ primes, that is, primes that differ from each
other by 2, is infinite. A stronger form, still not completely proved, due to
Hardy and Wright (See \cite{1}, p. 372.) gives an asymptotic formula for the
number of twin primes under a real number $x$ as $x\rightarrow\infty.$

In this paper we present a new approach to the problem by proposing a new kind
of sieve, which produces the twin primes, and we use it to count their number
under $x$ with two different heuristic arguments. The first method is
essentially the same as the one in \cite{1}. However, the second method is
novel. It is based on Dirichlet's theorem on primes in arithmetic progressions
(see e.g. \cite{3}, p. 146), and results in the same asymptotic formula as the
one given in \cite{1}, but it uses a simpler correction factor than theirs.
Though we have no theory for the accuracy of our estimates, we compute them,
both without and with the correction factor, and they turn out to be close to
the actual counts up to $8009^{2}$.

\section{The Double Sieve.}

The following property of twin primes is well known:

\begin{lemma}
All twin primes greater than $3$ are of the form $6n+1$ and $6n-1$ for the
same $n\in\mathbb{N}.$
\end{lemma}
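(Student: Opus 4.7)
The plan is to work modulo $6$, first showing that every prime exceeding $3$ lies in one of only two residue classes, and then ruling out all but one way to arrange a twin pair within those classes.

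First I would observe that the residues $0, 2, 3, 4$ modulo $6$ each share a nontrivial common factor with $6$, so any prime $p > 3$, being coprime to $6$, must satisfy $p \equiv \pm 1 \pmod 6$. This immediately cuts the possible residues of a prime greater than $3$ down to just the two classes $\{1, 5\} = \{+1, -1\} \pmod 6$.

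Next, given a twin-prime pair $(p, p+2)$ with $p > 3$, I would examine the two possibilities for the residue of $p$ modulo $6$. If $p \equiv +1 \pmod 6$, then $p+2 \equiv 3 \pmod 6$, so $3 \mid p+2$; since $p+2 > 3$, this contradicts the primality of $p+2$. The only remaining option is $p \equiv -1 \pmod 6$, which gives $p = 6n-1$ and $p+2 = 6n+1$ with the same integer $n$, as required.

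There is no substantive obstacle: the argument is a short residue-class analysis mod $6$. The one subtlety worth flagging is the role of the hypothesis ``greater than $3$,'' which is precisely what excludes the exceptional pair $(3,5)$, where the member $3$ is itself divisible by $3$ and so does not lie in a $6n \pm 1$ slot of the required kind.
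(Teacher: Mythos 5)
Your proof is correct and follows essentially the same route as the paper: both first reduce primes greater than $3$ to the residue classes $\pm1 \pmod 6$ and then pin down the arrangement of a twin pair. The only cosmetic difference is in the second step, where you rule out $p\equiv +1\pmod 6$ by noting $3\mid p+2$, while the paper observes that among numbers $6m\pm1$ only those sharing the same $n$ differ by exactly $2$; both close the argument equally well.
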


\begin{proof}
Clearly, the numbers greater than $3$ congruent to 0,2,3, and
$4\operatorname{mod}6$ are composite, and so all primes greater than $3$ must
be of the form $6n\pm1.$ Now for the same $n$ the two numbers $6n\pm1$ differ
by 2, but for $m\neq n,$ $6m\pm1$ and $6n\pm1$ differ by more than 2.
\end{proof}

Since we want to study twin primes and those are of the form $6n\pm1$, we sift
the set of such numbers. For the sake of convenience we call the two numbers
$6n\pm1$, for any positive integer $n,$ twins of each other, regardless of
whether they are primes or not.

For any prime $p\geq5,$ let $L_{p}$ denote the set of all positive integers of
the form $6n\pm1$ obtained by sifting out those with prime factors less than
$p$ together with their twins.

We construct these sets successively as follows.

First, $L_{5}=$ $\left\{  6n\pm1|n\in\mathbb{N}\right\}  ,$\ because this is
the set of all positive numbers with prime factors $\geq5.$ However, with an
eye on the upcoming construction of $L_{7},$ we write $L_{5}$ as a table
$T_{5}$ of two columns and five rows (see Table 1.), with the numbers $6n-1$
in the first column and the numbers $6n+1$ in the second column for
$n=1,\ldots,5,$ and consider the rest of $L_{5}$ as the numbers of this table
$+30k$ for $k=1,2,\ldots.$ Here $30=5\#.$\footnote{$n\#$ $=\prod_{p\leq n}p=n$-primorial}

To construct $L_{7}$, we proceed as follows: In $T_{5}$, we delete the
multiples of 5, that is, 5 and 25 (shown in dark orange) and the twins of
these multiples, that is, 7 and 23.(shown in light orange), and in $L_{5}$ we
delete all the numbers in the residue classes of these four numbers
$\operatorname{mod}30.$ Notice that, apart from the number 5, among the
deleted residue classes only those of 7 and 23 contain primes, and, except for
7, these are nontwin primes, because their twins are multiples of 5.

$T_{5}$ has 5 rows and 2 columns, and in each column we delete 2 numbers. So,
we have $2\left(  5-2\right)  =6$ undeleted numbers left. We write a table
$T_{7}$ for $L_{7}$ (see Table 2) consisting of six columns headed by the six
numbers that remained undeleted in $L_{5},$ and seven rows that are obtained
by continuing the residue classes $\operatorname{mod}30$ of the headers of the
columns. $L_{7}$ is the set of the numbers of this table $+210k$ for
$k=1,2,\ldots.$ Here $210=7\#.$

To construct $L_{11}$, we proceed similarly: In $L_{7}$, we delete the
multiples of 7 (shown in dark orange in $T_{7}$) and the twins of these
multiples (shown in light orange), plus all the numbers in the residue classes
of these twelve numbers $\operatorname{mod}7\#=\operatorname{mod}210.$ Thus
the deleted numbers include all the nontwin primes that are twins of multiples
of 7 and no other primes. $T_{7}$ has 7 rows and 6 columns, and, by the
Chinese remainder theorem, in each column we have a multiple of 7 and a twin
of a multiple of 7, which we delete. So, in $T_{7}$ we have $2\left(
5-2\right)  \left(  7-2\right)  =30$ undeleted numbers left.

From the undeleted numbers in $T_{7}$, we form the first row of $T_{11},$
shown in Table 3. Below this row we write 10 more rows by adding $210k$ for
$k=1,2,\ldots,10$ to each header. $T_{11}$ is the first complete set of
residues $\operatorname{mod}11\#=\operatorname{mod}2310$ of $L_{11}.$

We continue building $L_{p}$ for all successive primes in a similar fashion.

\begin{lemma}
For $p>3,$ every number $q<p^{2}-2$ in $L_{p}$ must be a twin prime, and every
twin prime $\geq p$ is in $L_{p}.$
\end{lemma}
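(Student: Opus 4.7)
The plan is to first distill from the recursive construction a clean membership criterion for $L_p$, and then derive each half of the lemma from that criterion by a short comparison with $p^2$. Tracing the passage $L_5 \to L_7 \to L_{11} \to \cdots$, one sees that at each step we delete precisely the surviving multiples of the preceding prime together with their twins; iterating, a number $q$ of the form $6n\pm 1$ lies in $L_p$ if and only if neither $q$ nor its twin $q\pm 2$ has a prime factor smaller than $p$. I would record this as a brief preliminary observation before starting the two directions.

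For the first direction ($q\in L_p$ with $q<p^2-2$ implies $q$ is a twin prime), the key move is to bound $q$ and its twin $q'=q\pm 2$ from both sides. The upper bound $q'<p^2$ comes directly from $q<p^2-2$. For the lower bound, the membership criterion says that neither $q$ nor $q'$ has a prime factor less than $p$; combined with $q\ge 5$ and $q'\ge 3$, this forces $q,q'\ge p$. Then any composite whose smallest prime factor is $\ge p$ would itself be at least $p^2$, which is incompatible with $q,q'<p^2$, so both $q$ and $q'$ must be prime. That is exactly the statement that $q$ is a twin prime.

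For the second direction (every twin prime $\ge p$ lies in $L_p$), the criterion makes the argument immediate: if $q$ and $q\pm 2$ are both primes and both $\ge p$, then neither has a prime factor smaller than $p$, so both survive every sifting step and end up in $L_p$. It is worth flagging here that the phrase ``twin prime $\ge p$'' must be read as requiring \emph{both} members of the pair to be $\ge p$, since in $L_p$ the survivors always come in twin pairs; otherwise the pair $(5,7)$ with $p=7$ would be a counterexample, because $7$ is discarded alongside its twin $5$ when multiples of $5$ are sifted out.

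The only step that really requires care is the preliminary observation unpacking the construction into a criterion solely about $q$ and $q\pm 2$; after that, the rest of the argument is bookkeeping around the single inequality ``a composite with smallest prime factor $\ge p$ is at least $p^2$''. The role of the bound $q<p^2-2$, rather than the weaker $q<p^2$, is precisely to push the composite-size bound through to the \emph{twin} $q'$ as well, which is what allows us to conclude that $q$ is part of a genuine twin prime pair and not merely prime on its own.
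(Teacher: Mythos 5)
Your proof is correct and takes essentially the same route as the paper's: both arguments rest on the single fact that the smallest composite with all prime factors $\geq p$ is $p^{2}$, applied to $q$ and, via the hypothesis $q<p^{2}-2$, to its twin as well, and your explicit membership criterion for $L_{p}$ only makes precise what the paper's proof uses implicitly. The one place you go beyond the paper is the edge case you flag, and it is a genuine one: when $p-2$ is itself prime, $q=p$ is a twin prime $\geq p$ that is nevertheless deleted as the twin of the multiple $p-2$ (for instance $7\notin L_{7}$ and $13\notin L_{13}$, as the tables confirm), so the second half of the lemma does require your reading that \emph{both} members of the pair be $\geq p$, whereas the paper's proof asserts without qualification that no twin prime $\geq p$ is among the sifted numbers.
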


\begin{proof}
Consider $L_{p}$ for some given $p.$ Then $p^{2}$ is the smallest composite
number\ with all factors $\geq p,$ and so no composite $q<p^{2}$ can be in $L_{p}.$

If $q<p^{2}-2$ is a nontwin prime $>3,$ then its twin is composite and
$<p^{2},$ hence $q$ cannot be in $L_{p}.$

So, if there are numbers in $L_{p}$ under $p^{2}-2,$ then they must be twin primes.

Furthermore, if $q$ is a twin prime $\geq p,$ then it is a member of $L_{p}.$
This is so, because in the construction of $L_{p}$ we have sifted out all
multiples of the primes $<p$ and their twins, but no twin prime $\geq p$ is
one of those.
\end{proof}

Thus, our construction provides a sieve for the twin primes analogous to the
sieve of Eratosthenes, as illustrated in Tables 1-3, which start with the twin
primes between $p$ and $p^{2}-2,$ for $p=5,7,11.$ We have no proof, however,
that the construction will give twin primes for every $p,$ no matter how
large. We have proved only that if there are numbers between $p$ and $p^{2}-2$
in $L_{p},$ then they are the twin primes of that interval.

Based on the construction above, in the next two sections we shall estimate
the number of twin primes in $L_{p}$ under $p^{2}-2$ in two ways. First, by
counting the number of deletions in each step of the construction and
computing the density of the undeleted numbers left that way.\ 

The second way of counting the number of twin primes in $L_{p}$ under
$p^{2}-2,$ is by counting the number of \emph{primes} deleted under $p^{2}$ in
the steps of the construction of $L_{p}$ and subtracting that from the total
number $\pi\left(  p^{2}\right)  $ of primes under $p^{2}.$

\section{Counting all deletions under $p^{2}.$}

In $T_{5}$\ we delete $2$ entries in each column, and so we keep $2\left(
5-2\right)  =6$ entries out of $5\#=30$ numbers. Hence the density of the
undeleted numbers in $T_{5}$ is $d_{5}=2\left(  5-2\right)  /5\#=1/5.$ These
numbers become the entries of $T_{7,}$ and so $d_{5}$ is also the density of
$T_{7}$ and of $L_{7}.$

In $T_{7}$ we delete $2$ entries out of 7 in each column, and so the density
of the undeleted numbers in $T_{7}$ becomes $d_{7}=\left(  7-2\right)
d_{5}/7=2\left(  5-2\right)  \left(  7-2\right)  /7\#=1/7,$ which is then also
the density of $T_{11}$ and of $L_{11}.$ Continuing in this way, and writing
$p_{n}$ for the $n$th prime, we obtain the density of $T_{p_{n}}$\ and
$L_{p_{n}},$ for $n=3,4,\ldots$ as%
\begin{equation}
d_{p_{n-1}}=\frac{2\prod_{k=2}^{n-1}\left(  p_{k}-2\right)  }{p_{n-1}\#}%
=\prod_{k=2}^{n-1}\frac{p_{k}-2}{p_{k}}.
\end{equation}

This density is essentially the same\footnote{\cite{1} gives the number of
pairs, we give the density of individual primes.} as the one on p. 372 in
\cite{1} and has the same shortcoming, namely that it is the density of
$L_{p_{n}}$ over the huge interval $\left[  1,p_{n}\#\right]  ,$ but we need
the density of $L_{p_{n}}$ over the much shorter interval $\left[  p_{n}%
,p_{n}^{2}\right]  ,$ where it is the same as the density of\ the twin primes
there. As detailed in \cite{1}, the corresponding ratio of the number of all
primes relative to $p_{n}\#$ in the short interval to that in the long
interval is known to be about $e^{\gamma}/2,$ by Mertens' theorem. (Thm. 429,
lc. Here $\gamma$ is Euler's constant.)\ Thus it is conjectured there that for
twin primes the corresponding correction factor should be $\left(  e^{\gamma
}/2\right)  ^{2}.$ This amounts to assuming the statistical independence of
the two numbers in a twin pair. (See \cite{2}.)

We try to avoid this problem by counting just the \emph{primes} deleted under
$p_{n}^{2}$ in the construction, since those are all nontwin primes. We do
this in the next section. Unfortunately, however, we have no theory to
determine the accuracy of our approximation. Empirically it turns out to be
very good, though, and with a plausible correction factor, which has no square
as the one in the first method, it becomes the same as the one above.

\section{Counting deleted primes.}

Let $N_{p}\left(  p_{k}\right)  =$ number of primes under $p^{2}$ deleted from
$L_{p_{k}}$ when we build $L_{p_{k+1}}$ from $L_{p_{k}}$ for $3<p_{k}<p.$

In $L_{5}$ there are 2 columns in the first period, and in each column there
is one entry that is the twin of a multiple of 5, namely 23 in the first
column and 7 in the second. Thus, the primes that we delete from $L_{5}$ are
the primes under $p^{2}$ of the two residue classes $7\operatorname{mod}30$
and $23\operatorname{mod}30$ plus the single prime $5.$ According to
Dirichlet's theorem on primes in arithmetic progressions (see e.g. \cite{3},
p. 146), each residue class that is relatively prime to the modulus has an
approximately equal share of the $\pi\left(  p^{2}\right)  $ primes under
$p^{2},$ that is, about $\pi\left(  p^{2}\right)  /\varphi\left(  30\right)  $
primes.\ Thus, for large $p$ we delete
\begin{equation}
N_{p}\left(  5\right)  \approx2\cdot\frac{\pi\left(  p^{2}\right)  }%
{\varphi\left(  30\right)  }+1\approx\frac{2}{\left(  3-1\right)  \left(
5-1\right)  }\pi\left(  p^{2}\right)  =\frac{1}{4}\pi\left(  p^{2}\right)
\end{equation}
primes from $L_{5}$ under $p^{2}$. This number is just an approximation,
because the fraction is correct only over integer multiples of the period and
Dirichlet's theorem is only asymptotically true. The same considerations apply
to the estimates below as well.

Similarly, from $L_{7}$ under $p^{2}$ we delete about
\begin{equation}
N_{p}\left(  7\right)  \approx2\left(  5-2\right)  \frac{\pi\left(
p^{2}\right)  }{\varphi\left(  210\right)  }=\frac{2\left(  5-2\right)
}{\left(  3-1\right)  \left(  5-1\right)  \left(  7-1\right)  }\pi\left(
p^{2}\right)  =\frac{1}{8}\pi\left(  p^{2}\right)
\end{equation}
primes, because $L_{7}$ has a period of 210, the first period has $2\left(
5-2\right)  =6$ columns, and in each column we delete one multiple of $7$ and
one twin of such a multiple, which is prime to 210. (All entries of $L_{7}$
are prime to $2,3,$ and $5,$ and the twin of a multiple of 7 in $L_{7}$\ is
prime also to 7, and so to $7\#.$)

From $L_{11}$ we delete about%
\begin{align}
N_{p}\left(  11\right)   &  \approx2\left(  5-2\right)  \left(  7-2\right)
\frac{\pi\left(  p^{2}\right)  }{\varphi\left(  2310\right)  }=\frac{2\left(
5-2\right)  \left(  7-2\right)  }{\left(  3-1\right)  \left(  5-1\right)
\left(  7-1\right)  \left(  11-1\right)  }\pi\left(  p^{2}\right) \nonumber\\
&  =\frac{\left(  5-2\right)  \left(  7-2\right)  }{\left(  5-1\right)
\left(  7-1\right)  \left(  11-1\right)  }\pi\left(  p^{2}\right)
=\frac{1}{16}\pi\left(  p^{2}\right)
\end{align}
primes under $p^{2}$.

Similarly, for general $p_{k}<p,$ with $k\geq4,$ the number of primes deleted
from $L_{p_{k}}$ under $p^{2}$ \ is
\begin{align}
N_{p}\left(  p_{k}\right)   &  \approx2\prod_{i=2}^{k-1}\left(  p_{i}%
-2\right)  \frac{\pi\left(  p^{2}\right)  }{\varphi\left(  p_{k}\#\right)
}=2\prod_{i=2}^{k-1}\frac{p_{i}-2}{p_{i}-1}\cdot\frac{1}{p_{k}-1}\pi\left(
p^{2}\right) \nonumber\\
&  =\prod_{i=3}^{k-1}\frac{p_{i}-2}{p_{i}-1}\cdot\frac{1}{p_{k}-1}\pi\left(
p^{2}\right)  =\prod_{i=3}^{k-1}\left(  1-\frac{1}{p_{i}-1}\right)
\cdot\frac{1}{p_{k}-1}\pi\left(  p^{2}\right) \nonumber\\
&  =\left[  \prod_{i=3}^{k-1}\left(  1-\frac{1}{p_{i}-1}\right)  -\prod
_{i=3}^{k-1}\left(  1-\frac{1}{p_{i}-1}\right)  \cdot\left(  1-\frac{1}%
{p_{k}-1}\right)  \right]  \pi\left(  p^{2}\right) \nonumber\\
&  =\left[  \prod_{i=3}^{k-1}\left(  1-\frac{1}{p_{i}-1}\right)  -\prod
_{i=3}^{k}\left(  1-\frac{1}{p_{i}-1}\right)  \right]  \pi\left(
p^{2}\right)  .
\end{align}

Thus, if we sum these expressions over $k,$ we get a telescoping sum and the
total number of primes deleted under $p_{n}^{2}$ is
\begin{equation}
\sum_{k=3}^{n}N_{p_{n}}\left(  p_{k}\right)  \approx\left[  1-\prod_{i=3}%
^{n}\left(  1-\frac{1}{p_{i}-1}\right)  \right]  \pi\left(  p_{n}^{2}\right)
,
\end{equation}
and so the number of twin primes left in $L_{p_{n}}$ under $p_{n}^{2},$ that
is, in the interval $\left[  p_{n},p_{n}^{2}\right]  ,$ is about
\begin{equation}
\prod_{i=3}^{n}\left(  1-\frac{1}{p_{i}-1}\right)  \pi\left(  p_{n}%
^{2}\right)  =\prod_{i=3}^{n}\left(  \frac{p_{i}-2}{p_{i}-1}\right)
\pi\left(  p_{n}^{2}\right)  . \label{count}%
\end{equation}

The second expression in Equation \ref{count} can also be interpreted as the
ratio of the number of twins relatively prime to $p_{n}\#$ to that of all
numbers relatively prime to $p_{n}\#$ in the interval $[5,p_{n}\#],$ times the
number of primes in $[1,p_{n}^{2}],$ (which has the same order of magnitude as
that in $\left[  p_{n},p_{n}^{2}\right]  ).$ Thus, while we did not assume
that the density of twin primes is the same in the short interval as in the
long one, we obtained a result that is equivalent to the ratio of the
densities of the twin relative primes and of all relative primes being the
same in the two intervals. (Clearly, in $\left[  p_{n},p_{n}^{2}\right]  $ the
numbers relatively prime to $p_{n}\#$ are the same as the primes there.)

Apparently, we can improve the estimate above by applying the same correction
factor $r$ that appears when comparing the numbers of \emph{all} relative
primes in the two intervals:
\begin{equation}
\pi\left(  p_{n}^{2}\right)  =rp_{n}^{2}\prod_{i=1}^{n}\frac{p_{i}-1}{p_{i}}.
\end{equation}
Here $p_{n}^{2}$ is the length of the short interval and the product is the
density of all relative primes in the long interval. Thus, $p_{n}^{2}%
\prod_{i=1}^{n}\frac{p_{i}-1}{p_{i}}$ would be the number of primes in the
interval$\ [1,p_{n}^{2}],$ if the density there were the same.

Hence%

\begin{equation}
r=\frac{\pi\left(  p_{n}^{2}\right)  }{p_{n}^{2}}\prod_{i=1}^{n}\frac{p_{i}%
}{p_{i}-1}.
\end{equation}
Denoting $p_{n}^{2}$ by $x,$ we can also write%
\begin{equation}
r=\frac{\pi\left(  x\right)  }{x}\prod_{p\leq\sqrt{x}}\frac{p}{p-1}.
\end{equation}

By Mertens' theorem,%
\begin{equation}
\prod_{p\leq\sqrt{x}}\frac{p-1}{p}\sim\frac{2e^{-\gamma}}{\log x},
\end{equation}
and by the prime number theorem,
\begin{equation}
\pi\left(  x\right)  \sim\frac{x}{\log x}.
\end{equation}
Thus,%
\begin{equation}
r\sim\frac{1}{\log x}\cdot\frac{\log x}{2e^{-\gamma}}=\frac{e^{\gamma}}{2}.
\end{equation}

Applying the correction factor $r$ to the estimate in Equation \ref{count}, we get%

\begin{align}
\pi_{2}\left(  x\right)   &  \approx r\prod_{i=3}^{n}\frac{p_{i}-2}{p_{i}%
-1}\pi\left(  p_{n}^{2}\right)  =\frac{\pi\left(  x\right)  }{x}\prod
_{p\leq\sqrt{x}}\frac{p}{p-1}\prod_{5\leq p\leq\sqrt{x}}\frac{p-2}{p-1}%
\pi\left(  x\right) \\
&  =\frac{\left(  \pi\left(  x\right)  \right)  ^{2}}{x}\cdot4\prod_{3\leq
p\leq\sqrt{x}}\frac{p\left(  p-2\right)  }{\left(  p-1\right)  ^{2}}.
\label{corr}%
\end{align}
In Table 4 we compare the estimate in Equation \ref{count} and this estimate
to the actual count of twin primes in the interval $\left[  p,p^{2}\right]  $.
As one can see, the numbers from Equation \ref{count} are pretty close to the
actual counts, but those from Equation \ref{corr} are much closer. (See also
Fig. 1.)

If we apply the asymptotic estimate $\pi\left(  x\right)  \sim x/\log x$ for
the number of all primes under $x,$ then from Equation \ref{corr} we obtain
\begin{equation}
\pi_{2}\left(  x\right)  \sim\frac{4x}{\log^{2}x}\prod_{3\leq p}\frac{p\left(
p-2\right)  }{\left(  p-1\right)  ^{2}},
\end{equation}
as an asymptotic estimate for the number of twin primes under $x.$ This result
equals the estimate in Equation 22.20.1 in \cite{1}, where the number of pairs
is counted, accounting for the factor of 2 in place of our 4 for the count of
the individual primes.

%
%
%
\begin{table}[htbp]
  \centering
    \begin{tabular}{rr}
        \rowcolor[rgb]{ 1,  .6,  0} \multicolumn{1}{|r}{5} & \multicolumn{1}{r|}{\cellcolor[rgb]{ 1,  .8,  .6}7} \\
        \rowcolor[rgb]{ .812,  .906,  .961} \multicolumn{1}{|r}{11} & \multicolumn{1}{r|}{13} \\
        \rowcolor[rgb]{ .812,  .906,  .961} \multicolumn{1}{|r}{17} & \multicolumn{1}{r|}{19} \\
        \rowcolor[rgb]{ 1,  .8,  .6} \multicolumn{1}{|r}{23} & \multicolumn{1}{r|}{\cellcolor[rgb]{ 1,  .6,  0}25} \\
        \rowcolor[rgb]{ .812,  .906,  .961} \multicolumn{1}{|r}{29} & \multicolumn{1}{r|}{31} \\
              &  \\
    \end{tabular}%
   \caption{ The Set T\textsubscript{5}.} 
  \label{tab:addlabel}%
\end{table}%
%
%
\begin{table}[htbp]
  \centering
    \begin{tabular}{rrrrrr}
    \rowcolor[rgb]{ .812,  .906,  .961} \multicolumn{1}{|r}{11} & 13    & \cellcolor[rgb]{ 1,  1,  1}17 & \cellcolor[rgb]{ 1,  1,  1}19 & 29    & \multicolumn{1}{r|}{31} \\
    \rowcolor[rgb]{ .812,  .906,  .961} \multicolumn{1}{|r}{41} & 43    & \cellcolor[rgb]{ 1,  .8,  .6}47 & \cellcolor[rgb]{ 1,  .6,  0}49 & 59    & \multicolumn{1}{r|}{61} \\
    \rowcolor[rgb]{ .812,  .906,  .961} \multicolumn{1}{|r}{71} & 73    & \cellcolor[rgb]{ 1,  .6,  0}77 & \cellcolor[rgb]{ 1,  .8,  .6}79 & \cellcolor[rgb]{ 1,  .8,  .6}89 & \multicolumn{1}{r|}{\cellcolor[rgb]{ 1,  .6,  0}91} \\
    \rowcolor[rgb]{ .812,  .906,  .961} \multicolumn{1}{|r}{101} & 103   & \cellcolor[rgb]{ 1,  1,  1}107 & \cellcolor[rgb]{ 1,  1,  1}109 & \cellcolor[rgb]{ 1,  .6,  0}119 & \multicolumn{1}{r|}{\cellcolor[rgb]{ 1,  .8,  .6}121} \\
    \rowcolor[rgb]{ 1,  .8,  .6} \multicolumn{1}{|r}{131} & \cellcolor[rgb]{ 1,  .6,  0}133 & \cellcolor[rgb]{ 1,  1,  1}137 & \cellcolor[rgb]{ 1,  1,  1}139 & \cellcolor[rgb]{ .812,  .906,  .961}149 & \multicolumn{1}{r|}{\cellcolor[rgb]{ .812,  .906,  .961}151} \\
    \rowcolor[rgb]{ 1,  .6,  0} \multicolumn{1}{|r}{161} & \cellcolor[rgb]{ 1,  .8,  .6}163 & \cellcolor[rgb]{ 1,  1,  1}167 & \cellcolor[rgb]{ 1,  1,  1}169 & \cellcolor[rgb]{ .812,  .906,  .961}179 & \multicolumn{1}{r|}{\cellcolor[rgb]{ .812,  .906,  .961}181} \\
    \rowcolor[rgb]{ .812,  .906,  .961} \multicolumn{1}{|r}{191} & 193   & \cellcolor[rgb]{ 1,  1,  1}197 & \cellcolor[rgb]{ 1,  1,  1}199 & 209   & \multicolumn{1}{r|}{211} \\
          &       &       &       &       &  \\
    \end{tabular}%
   \caption{The Set T\textsubscript{7}.} 
  \label{tab:addlabel}%
\end{table}%
%
%
\begin{adjustwidth}{-16em}{-16em}
 \setlength{\textwidth}{490pt}%
\begin{table}[htbp]
  \centering
  
  \resizebox{.95\textwidth}{!}{%
    \begin{tabular}{rrrrrrrrrrrrrrrrrrrrrrrrrrrrrr}
    \rowcolor[rgb]{ 1,  .6,  0} \multicolumn{1}{|r}{11} & \cellcolor[rgb]{ 1,  .8,  .6}13 & \cellcolor[rgb]{ 1,  1,  1}17 & \cellcolor[rgb]{ 1,  1,  1}19 & \cellcolor[rgb]{ .812,  .906,  .961}29 & \cellcolor[rgb]{ .812,  .906,  .961}31 & \cellcolor[rgb]{ 1,  1,  1}41 & \cellcolor[rgb]{ 1,  1,  1}43 & \cellcolor[rgb]{ .812,  .906,  .961}59 & \cellcolor[rgb]{ .812,  .906,  .961}61 & \cellcolor[rgb]{ 1,  1,  1}71 & \cellcolor[rgb]{ 1,  1,  1}73 & \cellcolor[rgb]{ .812,  .906,  .961}101 & \cellcolor[rgb]{ .812,  .906,  .961}103 & \cellcolor[rgb]{ 1,  1,  1}107 & \cellcolor[rgb]{ 1,  1,  1}109 & \cellcolor[rgb]{ .812,  .906,  .961}137 & \cellcolor[rgb]{ .812,  .906,  .961}139 & \cellcolor[rgb]{ 1,  1,  1}149 & \cellcolor[rgb]{ 1,  1,  1}151 & \cellcolor[rgb]{ .812,  .906,  .961}167 & \cellcolor[rgb]{ .812,  .906,  .961}169 & \cellcolor[rgb]{ 1,  1,  1}179 & \cellcolor[rgb]{ 1,  1,  1}181 & \cellcolor[rgb]{ .812,  .906,  .961}191 & \cellcolor[rgb]{ .812,  .906,  .961}193 & \cellcolor[rgb]{ 1,  1,  1}197 & \cellcolor[rgb]{ 1,  1,  1}199 & 209   & \multicolumn{1}{r|}{\cellcolor[rgb]{ 1,  .8,  .6}211} \\
    \rowcolor[rgb]{ .812,  .906,  .961} \multicolumn{1}{|r}{221} & 223   & \cellcolor[rgb]{ 1,  1,  1}227 & \cellcolor[rgb]{ 1,  1,  1}229 & 239   & 241   & \cellcolor[rgb]{ 1,  .8,  .6}251 & \cellcolor[rgb]{ 1,  .6,  0}253 & 269   & 271   & \cellcolor[rgb]{ 1,  1,  1}281 & \cellcolor[rgb]{ 1,  1,  1}283 & 311   & 313   & \cellcolor[rgb]{ 1,  .8,  .6}317 & \cellcolor[rgb]{ 1,  .6,  0}319 & 347   & 349   & \cellcolor[rgb]{ 1,  1,  1}359 & \cellcolor[rgb]{ 1,  1,  1}361 & 377   & 379   & \cellcolor[rgb]{ 1,  1,  1}389 & \cellcolor[rgb]{ 1,  1,  1}391 & 401   & 403   & \cellcolor[rgb]{ 1,  .6,  0}407 & \cellcolor[rgb]{ 1,  .8,  .6}409 & 419   & \multicolumn{1}{r|}{421} \\
    \rowcolor[rgb]{ .812,  .906,  .961} \multicolumn{1}{|r}{431} & 433   & \cellcolor[rgb]{ 1,  1,  1}437 & \cellcolor[rgb]{ 1,  1,  1}439 & \cellcolor[rgb]{ 1,  .8,  .6}449 & \cellcolor[rgb]{ 1,  .6,  0}451 & \cellcolor[rgb]{ 1,  1,  1}461 & \cellcolor[rgb]{ 1,  1,  1}463 & 479   & 481   & \cellcolor[rgb]{ 1,  1,  1}491 & \cellcolor[rgb]{ 1,  1,  1}493 & 521   & 523   & \cellcolor[rgb]{ 1,  1,  1}527 & \cellcolor[rgb]{ 1,  1,  1}529 & 557   & 559   & \cellcolor[rgb]{ 1,  1,  1}569 & \cellcolor[rgb]{ 1,  1,  1}571 & 587   & 589   & \cellcolor[rgb]{ 1,  1,  1}599 & \cellcolor[rgb]{ 1,  1,  1}601 & 611   & 613   & \cellcolor[rgb]{ 1,  1,  1}617 & \cellcolor[rgb]{ 1,  1,  1}619 & 629   & \multicolumn{1}{r|}{631} \\
    \rowcolor[rgb]{ .812,  .906,  .961} \multicolumn{1}{|r}{641} & 643   & \cellcolor[rgb]{ 1,  .8,  .6}647 & \cellcolor[rgb]{ 1,  .6,  0}649 & 659   & 661   & \cellcolor[rgb]{ 1,  .6,  0}671 & \cellcolor[rgb]{ 1,  .8,  .6}673 & 689   & 691   & \cellcolor[rgb]{ 1,  1,  1}701 & \cellcolor[rgb]{ 1,  1,  1}703 & 731   & 733   & \cellcolor[rgb]{ 1,  .6,  0}737 & \cellcolor[rgb]{ 1,  .8,  .6}739 & 767   & 769   & \cellcolor[rgb]{ 1,  .8,  .6}779 & \cellcolor[rgb]{ 1,  .6,  0}781 & 797   & 799   & \cellcolor[rgb]{ 1,  1,  1}809 & \cellcolor[rgb]{ 1,  1,  1}811 & 821   & 823   & \cellcolor[rgb]{ 1,  1,  1}827 & \cellcolor[rgb]{ 1,  1,  1}829 & 839   & \multicolumn{1}{r|}{841} \\
    \rowcolor[rgb]{ .812,  .906,  .961} \multicolumn{1}{|r}{851} & 853   & \cellcolor[rgb]{ 1,  1,  1}857 & \cellcolor[rgb]{ 1,  1,  1}859 & \cellcolor[rgb]{ 1,  .6,  0}869 & \cellcolor[rgb]{ 1,  .8,  .6}871 & \cellcolor[rgb]{ 1,  1,  1}881 & \cellcolor[rgb]{ 1,  1,  1}883 & 899   & 901   & \cellcolor[rgb]{ 1,  .8,  .6}911 & \cellcolor[rgb]{ 1,  .6,  0}913 & 941   & 943   & \cellcolor[rgb]{ 1,  1,  1}947 & \cellcolor[rgb]{ 1,  1,  1}949 & \cellcolor[rgb]{ 1,  .8,  .6}977 & \cellcolor[rgb]{ 1,  .6,  0}979 & \cellcolor[rgb]{ 1,  1,  1}989 & \cellcolor[rgb]{ 1,  1,  1}991 & 1007  & 1009  & \cellcolor[rgb]{ 1,  1,  1}1019 & \cellcolor[rgb]{ 1,  1,  1}1021 & 1031  & 1033  & \cellcolor[rgb]{ 1,  1,  1}1037 & \cellcolor[rgb]{ 1,  1,  1}1039 & 1049  & \multicolumn{1}{r|}{1051} \\
    \rowcolor[rgb]{ .812,  .906,  .961} \multicolumn{1}{|r}{1061} & 1063  & \cellcolor[rgb]{ 1,  .6,  0}1067 & \cellcolor[rgb]{ 1,  .8,  .6}1069 & 1079  & 1081  & \cellcolor[rgb]{ 1,  1,  1}1091 & \cellcolor[rgb]{ 1,  1,  1}1093 & \cellcolor[rgb]{ 1,  .8,  .6}1109 & \cellcolor[rgb]{ 1,  .6,  0}1111 & \cellcolor[rgb]{ 1,  1,  1}1121 & \cellcolor[rgb]{ 1,  1,  1}1123 & 1151  & 1153  & \cellcolor[rgb]{ 1,  1,  1}1157 & \cellcolor[rgb]{ 1,  1,  1}1159 & 1187  & 1189  & \cellcolor[rgb]{ 1,  .6,  0}1199 & \cellcolor[rgb]{ 1,  .8,  .6}1201 & 1217  & 1219  & \cellcolor[rgb]{ 1,  1,  1}1229 & \cellcolor[rgb]{ 1,  1,  1}1231 & \cellcolor[rgb]{ 1,  .8,  .6}1241 & \cellcolor[rgb]{ 1,  .6,  0}1243 & \cellcolor[rgb]{ 1,  1,  1}1247 & \cellcolor[rgb]{ 1,  1,  1}1249 & 1259  & \multicolumn{1}{r|}{1261} \\
    \rowcolor[rgb]{ .812,  .906,  .961} \multicolumn{1}{|r}{1271} & 1273  & \cellcolor[rgb]{ 1,  1,  1}1277 & \cellcolor[rgb]{ 1,  1,  1}1279 & 1289  & 1291  & \cellcolor[rgb]{ 1,  1,  1}1301 & \cellcolor[rgb]{ 1,  1,  1}1303 & 1319  & 1321  & \cellcolor[rgb]{ 1,  .6,  0}1331 & \cellcolor[rgb]{ 1,  .8,  .6}1333 & 1361  & 1363  & \cellcolor[rgb]{ 1,  1,  1}1367 & \cellcolor[rgb]{ 1,  1,  1}1369 & \cellcolor[rgb]{ 1,  .6,  0}1397 & \cellcolor[rgb]{ 1,  .8,  .6}1399 & \cellcolor[rgb]{ 1,  1,  1}1409 & \cellcolor[rgb]{ 1,  1,  1}1411 & 1427  & 1429  & \cellcolor[rgb]{ 1,  .6,  0}1439 & \cellcolor[rgb]{ 1,  .6,  0}1441 & 1451  & 1453  & \cellcolor[rgb]{ 1,  1,  1}1457 & \cellcolor[rgb]{ 1,  1,  1}1459 & 1469  & \multicolumn{1}{r|}{1471} \\
    \rowcolor[rgb]{ .812,  .906,  .961} \multicolumn{1}{|r}{1481} & 1483  & \cellcolor[rgb]{ 1,  1,  1}1487 & \cellcolor[rgb]{ 1,  1,  1}1489 & 1499  & 1501  & \cellcolor[rgb]{ 1,  1,  1}1511 & \cellcolor[rgb]{ 1,  1,  1}1513 & \cellcolor[rgb]{ 1,  .6,  0}1529 & \cellcolor[rgb]{ 1,  .8,  .6}1531 & \cellcolor[rgb]{ 1,  1,  1}1541 & \cellcolor[rgb]{ 1,  1,  1}1543 & \cellcolor[rgb]{ 1,  .8,  .6}1571 & \cellcolor[rgb]{ 1,  .6,  0}1573 & \cellcolor[rgb]{ 1,  1,  1}1577 & \cellcolor[rgb]{ 1,  1,  1}1579 & 1607  & 1609  & \cellcolor[rgb]{ 1,  1,  1}1619 & \cellcolor[rgb]{ 1,  1,  1}1621 & \cellcolor[rgb]{ 1,  .8,  .6}1637 & \cellcolor[rgb]{ 1,  .6,  0}1639 & \cellcolor[rgb]{ 1,  1,  1}1649 & \cellcolor[rgb]{ 1,  1,  1}1651 & \cellcolor[rgb]{ 1,  .6,  0}1661 & \cellcolor[rgb]{ 1,  .8,  .6}1663 & \cellcolor[rgb]{ 1,  1,  1}1667 & \cellcolor[rgb]{ 1,  1,  1}1669 & 1679  & \multicolumn{1}{r|}{1681} \\
    \rowcolor[rgb]{ .812,  .906,  .961} \multicolumn{1}{|r}{1691} & 1693  & \cellcolor[rgb]{ 1,  1,  1}1697 & \cellcolor[rgb]{ 1,  1,  1}1699 & 1709  & 1711  & \cellcolor[rgb]{ 1,  1,  1}1721 & \cellcolor[rgb]{ 1,  1,  1}1723 & 1739  & 1741  & \cellcolor[rgb]{ 1,  1,  1}1751 & \cellcolor[rgb]{ 1,  1,  1}1753 & 1781  & 1783  & \cellcolor[rgb]{ 1,  1,  1}1787 & \cellcolor[rgb]{ 1,  1,  1}1789 & 1817  & 1819  & \cellcolor[rgb]{ 1,  1,  1}1829 & \cellcolor[rgb]{ 1,  1,  1}1831 & 1847  & 1849  & \cellcolor[rgb]{ 1,  .6,  0}1859 & \cellcolor[rgb]{ 1,  .8,  .6}1861 & 1871  & 1873  & \cellcolor[rgb]{ 1,  1,  1}1877 & \cellcolor[rgb]{ 1,  1,  1}1879 & 1889  & \multicolumn{1}{r|}{1891} \\
    \rowcolor[rgb]{ 1,  .8,  .6} \multicolumn{1}{|r}{1901} & \cellcolor[rgb]{ 1,  .6,  0}1903 & \cellcolor[rgb]{ 1,  1,  1}1907 & \cellcolor[rgb]{ 1,  1,  1}1909 & \cellcolor[rgb]{ .812,  .906,  .961}1919 & \cellcolor[rgb]{ .812,  .906,  .961}1921 & \cellcolor[rgb]{ 1,  1,  1}1931 & \cellcolor[rgb]{ 1,  1,  1}1933 & \cellcolor[rgb]{ .812,  .906,  .961}1949 & \cellcolor[rgb]{ .812,  .906,  .961}1951 & \cellcolor[rgb]{ 1,  1,  1}1961 & \cellcolor[rgb]{ 1,  1,  1}1963 & \cellcolor[rgb]{ 1,  .6,  0}1991 & 1993  & \cellcolor[rgb]{ 1,  1,  1}1997 & \cellcolor[rgb]{ 1,  1,  1}1999 & \cellcolor[rgb]{ .812,  .906,  .961}2027 & \cellcolor[rgb]{ .812,  .906,  .961}2029 & \cellcolor[rgb]{ 1,  1,  1}2039 & \cellcolor[rgb]{ 1,  1,  1}2041 & \cellcolor[rgb]{ 1,  .6,  0}2057 & 2059  & \cellcolor[rgb]{ 1,  1,  1}2069 & \cellcolor[rgb]{ 1,  1,  1}2071 & \cellcolor[rgb]{ .812,  .906,  .961}2081 & \cellcolor[rgb]{ .812,  .906,  .961}2083 & \cellcolor[rgb]{ 1,  1,  1}2087 & \cellcolor[rgb]{ 1,  1,  1}2089 & 2099  & \multicolumn{1}{r|}{\cellcolor[rgb]{ 1,  .6,  0}2101} \\
    \rowcolor[rgb]{ .812,  .906,  .961} \multicolumn{1}{|r}{2111} & 2113  & \cellcolor[rgb]{ 1,  1,  1}2117 & \cellcolor[rgb]{ 1,  1,  1}2119 & 2129  & 2131  & \cellcolor[rgb]{ 1,  1,  1}2141 & \cellcolor[rgb]{ 1,  1,  1}2143 & 2159  & 2161  & \cellcolor[rgb]{ 1,  1,  1}2171 & \cellcolor[rgb]{ 1,  1,  1}2173 & 2201  & 2203  & \cellcolor[rgb]{ 1,  1,  1}2207 & \cellcolor[rgb]{ 1,  1,  1}2209 & 2237  & 2239  & \cellcolor[rgb]{ 1,  1,  1}2249 & \cellcolor[rgb]{ 1,  1,  1}2251 & 2267  & 2269  & \cellcolor[rgb]{ 1,  1,  1}2279 & \cellcolor[rgb]{ 1,  1,  1}2281 & 2291  & 2293  & \cellcolor[rgb]{ 1,  .8,  .6}2297 & \cellcolor[rgb]{ 1,  .6,  0}2299 & 2309  & \multicolumn{1}{r|}{2311} \\
          &       &       &       &       &       &       &       &       &       &       &       &       &       &       &       &       &       &       &       &       &       &       &       &       &       &       &       &       &  \\
    \end{tabular}%
    }
    \caption{The Set T\textsubscript{11}.} 
  \label{tab:addlabel}%
\end{table}%
 \setlength{\textwidth}{390pt}%
 \end{adjustwidth}
%
%
\begin{table}[htbp]
  \centering
    \begin{tabular}{|c|c|c|c|c|}
    \hline
    \multicolumn{1}{|c|}{\textit{p}} & \multicolumn{1}{c|}{Actual} & \multicolumn{1}{c|}{Equation 7} & \multicolumn{1}{c|}{\textit{r}} & \multicolumn{1}{c|}{Equation 15} \\
    \hline
    \multicolumn{1}{|c|}{101} & \multicolumn{1}{c|}{404} & \multicolumn{1}{c|}{394} & \multicolumn{1}{c|}{1.03975} & \multicolumn{1}{c|}{410} \\
 \hline
    \multicolumn{1}{|c|}{199} & \multicolumn{1}{c|}{1150} & \multicolumn{1}{c|}{1143} & \multicolumn{1}{c|}{1.01694} & \multicolumn{1}{c|}{1162} \\
 \hline
    \multicolumn{1}{|c|}{307} & \multicolumn{1}{c|}{2288} & \multicolumn{1}{c|}{2332} & \multicolumn{1}{c|}{0.99588} & \multicolumn{1}{c|}{2323} \\
 \hline
    \multicolumn{1}{|c|}{401} & \multicolumn{1}{c|}{3578} & \multicolumn{1}{c|}{3618} & \multicolumn{1}{c|}{0.99050} & \multicolumn{1}{c|}{3683} \\
 \hline
    \multicolumn{1}{|c|}{503} & \multicolumn{1}{c|}{5170} & \multicolumn{1}{c|}{5263} & \multicolumn{1}{c|}{0.98667} & \multicolumn{1}{c|}{5193} \\
 \hline
    \multicolumn{1}{|c|}{601} & \multicolumn{1}{c|}{6974} & \multicolumn{1}{c|}{7103} & \multicolumn{1}{c|}{0.98036} & \multicolumn{1}{c|}{6964} \\
 \hline
    \multicolumn{1}{|c|}{701} & \multicolumn{1}{c|}{8946} & \multicolumn{1}{c|}{9186} & \multicolumn{1}{c|}{0.97882} & \multicolumn{1}{c|}{8992} \\
 \hline
    \multicolumn{1}{|c|}{797} & \multicolumn{1}{c|}{11128} & \multicolumn{1}{c|}{11426} & \multicolumn{1}{c|}{0.97493} & \multicolumn{1}{c|}{11140} \\
 \hline
    \multicolumn{1}{|c|}{907} & \multicolumn{1}{c|}{13674} & \multicolumn{1}{c|}{14223} & \multicolumn{1}{c|}{0.97287} & \multicolumn{1}{c|}{13837} \\
 \hline
    \multicolumn{1}{|c|}{1009} & \multicolumn{1}{c|}{16556} & \multicolumn{1}{c|}{17053} & \multicolumn{1}{c|}{0.97038} & \multicolumn{1}{c|}{16548} \\
 \hline
    \multicolumn{1}{|c|}{1999} & \multicolumn{1}{c|}{53556} & \multicolumn{1}{c|}{55038} & \multicolumn{1}{c|}{0.96144} & \multicolumn{1}{c|}{52916} \\
 \hline
    \multicolumn{1}{|c|}{3001} & \multicolumn{1}{c|}{107610} & \multicolumn{1}{c|}{111342} & \multicolumn{1}{c|}{0.95734} & \multicolumn{1}{c|}{106592} \\
 \hline
    \multicolumn{1}{|c|}{4001} & \multicolumn{1}{c|}{176914} & \multicolumn{1}{c|}{184081} & \multicolumn{1}{c|}{0.95390} & \multicolumn{1}{c|}{175595} \\
 \hline
    \multicolumn{1}{|c|}{5003} & \multicolumn{1}{c|}{261086} & \multicolumn{1}{c|}{272412} & \multicolumn{1}{c|}{0.95202} & \multicolumn{1}{c|}{259343} \\
 \hline
    \multicolumn{1}{|c|}{6007} & \multicolumn{1}{c|}{358978} & \multicolumn{1}{c|}{375972} & \multicolumn{1}{c|}{0.95005} & \multicolumn{1}{c|}{357192} \\
 \hline
    \multicolumn{1}{|c|}{7001} & \multicolumn{1}{c|}{469528} & \multicolumn{1}{c|}{492326} & \multicolumn{1}{c|}{0.94945} & \multicolumn{1}{c|}{467437} \\
 \hline
    \multicolumn{1}{|c|}{8009} & \multicolumn{1}{c|}{594636} & \multicolumn{1}{c|}{625062} & \multicolumn{1}{c|}{0.94773} & \multicolumn{1}{c|}{592388} \\
    \hline
    \end{tabular}%
    \caption{Count of Twin Primes between \textit{p }and \textit{p\textsuperscript{2}}}
  \label{tab:addLabel}%
\end{table}%

%
%
\psset{xunit=.0015cm, yunit=1.03cm, ylabelFactor=\% }
\begin{pspicture}[](-1000,-3.0)(9000,8.0)
	\psaxes[Dx=1000,Dy=1.0,tickstyle=bottom](0,0)(0,-1.5)(8500,6.0)
	\psline[linewidth=.05,linecolor=blue](5000,2.0)(6000,2.0)
	\rput(-1000,2.5){\rotateleft{\scriptsize\rmfamily \% Differnce}}
	\uput[0](4000,-1.1){\scriptsize\rmfamily Prime }
	\rput[l](6100,2.0){\small\rmfamily Equation 7}
	\psline[linewidth=.05,linecolor=red](5000,1.0)(6000,1.0)
	\rput[l](6100,1.0){\small\rmfamily Equation 15}
	\rput[l](1500,- 2.0){\small\rmfamily Figure 1.  \% Difference from Actual}
	\fileplot[linewidth=1.5pt,linecolor=blue]{eq7.dat}
	\fileplot[linewidth=1.5pt,linecolor=red]{eq15.dat}
\end{pspicture}

\begin{thebibliography}{HW}                                                                                               %

\bibitem[HW]{1}G.H. Hardy and E.M. Wright, An Introduction to the Theory of
Numbers, 4th ed. Oxford.

\bibitem[P]{2}George P\'{o}lya, Am. Math. Monthly 66 (1959), 375-84.

\bibitem[A]{3}Tom M. Apostol, Introduction to Analytic Number Theory,
Springer-Verlag, Berlin, 1976.
\end{thebibliography}
\end{document}